\newtheorem{theorem}{Theorem}[section]
\newtheorem{conjecture}[theorem]{Conjecture}
\newtheorem{question}[theorem]{Question}
\newtheorem{lemma}[theorem]{Lemma}
\theoremstyle{definition}
\newtheorem{definition}[theorem]{Definition}
\newtheorem{example}[theorem]{Example}
\theoremstyle{remark}
\numberwithin{equation}{section}
\newcolumntype{C}[1]{>{\centering\let\newline\\\arraybackslash}m{#1}}
\newcommand{\p}[1]{\left({#1}\right)}
\newcommand{\Z}{\mathbb{Z}}
\newcommand{\Q}{\mathbb{Q}}
\newcommand{\C}{\mathbb{C}}
\newcommand{\F}{\mathbb{F}}
\newcommand{\derive}[2]{\frac{\partial^{#2} #1}{\partial t^{#2}}}
\newcommand{\of}[1]{\!\left({#1}\right)}
\tikzset{view1/.style={x={(0cm,0.45cm)},y={(-0.35cm, -0.4cm)},z={(0.5cm, -0.2cm)}}}
\tikzset{axis/.style={color=black!25,font=\scriptsize,thin,->}}
\tikzset{axis2/.style={color=black,font=\scriptsize,thin,->}}
\tikzset{label/.style={font=\footnotesize}}
\tikzset{back/.style={dashed, color=blue, thick}}
\tikzset{edge/.style={color=blue, thick}}
\tikzset{vertex/.style={coordinate,inner sep=1.6pt,circle,draw=Black,fill=Red,thick,anchor=base}}
\tikzset{point/.style={coordinate,inner sep=1.1pt,circle,draw=Black,fill=Red,thick,anchor=base}}
\tikzset{point2/.style={coordinate,inner sep=1.1pt,circle,draw=Black,fill=Black,thick,anchor=base}}
\tikzset{shade1/.style={fill=green!50,fill opacity=0.5}}
\tikzset{shade2/.style={fill=green!66,fill opacity=0.5}}
\tikzset{shade3/.style={fill=green!90,fill opacity=0.5}}
\tikzset{shade4/.style={fill=green,fill opacity=0.5}}
\newcommand{\draworigin}{\node[vertex] at (0,0,0) {};}
\newcommand{\draworiginb}{\draw[thick] (-0.15,0) -- (0.15,0);
\draw[thick] (0,0.15) -- (0,-0.15);}
\newcommand{\pdual}{
\begin{tikzpicture}[]
\draw[thick,<->] (-0.4,0) -- (0.4,0);
\node[above] at (0,-0.05) {\footnotesize$\Delta\!^\circ$};
\end{tikzpicture}}
\begin{document}

\title{Strong arithmetic mirror symmetry and toric isogenies}

\author{Christopher Magyar}
\address{University of Wisconsin--Eau Claire}
\email{magyarca@uwec.edu}

\author{Ursula Whitcher}
\address{University of Wisconsin--Eau Claire and Mathematical Reviews}
\email{whitchua@uwec.edu}
\thanks{We thank Volker Braun and the participants of Sage Days 50 for many interesting conversations, the AIM SQuaRE program for an inspirational research environment, and the anonymous referees for suggestions which materially improved our exposition.  The first author was supported in part by Student Blugold Commitment Differential Tuition funds through the University of Wisconsin-Eau Claire Student/Faculty Research Collaboration program.}

\subjclass[2010]{11G42, 14H52, 14J28}

\date{}

\begin{abstract}
We say a mirror pair of Calabi-Yau varieties exhibits strong arithmetic mirror symmetry if the number of points on each variety over a finite field is equivalent, modulo the order of that field.  We search for strong mirror symmetry in pencils of toric hypersurfaces generated using polar dual pairs of reflexive polytopes.  We characterize the pencils of elliptic curves where strong arithmetic mirror symmetry arises, and provide experimental evidence that the phenomenon generalizes to higher dimensions.  We also provide experimental evidence that pencils of K3 surfaces with the same Picard-Fuchs equation have related point counts.
\end{abstract}

\maketitle

\section{Introduction}

Mathematical formulations of mirror symmetry have led to deep insights regarding relationships between very different geometric spaces.  In some cases, mirror pairs of varieties have related arithmetic structure.  

Let $X/\F_q$ be an algebraic variety over the finite field of order $q$, and let $\#X(\F_{q^s})$ be the number of $\F_{q^s}$-rational points on $X$.  Then the \emph{zeta function} of $X$ is the generating function given by

\[Z(X/\F_q,T):=\exp\left(\sum_{s=1}^{\infty}\#X(\F_{q^s}) \frac{T^s}{s}\right)\in \Q[[T]].\]

The celebrated Weil conjectures, proved in \cite{dwork}, state that the zeta function is a rational function of $T$, with the degrees of the numerator and denominator determined by the Betti numbers of $X$.  Mirror symmetry for Calabi-Yau threefolds entails an interchange of the Hodge numbers $h^{1,1}$ and $h^{2,1}$, and thus a relationship between the Betti numbers of mirror Calabi-Yau threefolds.  Thus, we may use information about the structure of the zeta function of a Calabi-Yau threefold to make predictions about the structure of the zeta function of its mirror.  

The authors of \cite{CORV, CORV2} identified a much stronger relationship between zeta functions for a specific mirror family of Calabi-Yau threefolds.  Any smooth homogeneous quintic polynomial in five variables defines a Calabi-Yau threefold as a hypersurface in $\mathbb{P}^4$.  We may construct the mirror family to these quintics using the Greene-Plesser construction, as follows.  Consider the Fermat quintic pencil $X_t$ given by
\[x_0^5 + x_1^5 + x_2^5 + x_3^5 + x_4^5 - 5 t x_0 x_1 x_2 x_3 x_4 = 0.\]
Each member of the pencil admits a group action by $(\mathbb{Z}/5\mathbb{Z})^3$, which consists of multiplying each coordinate by suitably chosen fifth roots of unity.  Taking the quotient by this group action and choosing a minimal resolution of singularities, we obtain a new pencil of Calabi-Yau threefolds, $Y_t$.  Then $Y_t$ is the mirror family to the family of smooth quintics in $\mathbb{P}^4$.  The computations in \cite{CORV2} show that $Z(X_t/\F_q,T)$ and $Z(Y_t/\F_q,T)$ share a common quartic factor (depending on $t$) in their numerator.

One may try to extend the arithmetic mirror symmetry relationships observed for the Fermat quintic pencil in two ways: by considering the Fermat pencil and its Greene-Plesser mirror in other dimensions, or by considering other mirror constructions.  

Let $X_t$ be the Fermat quartic pencil.  In this case, the zeta function and mirror zeta function have been described explicitly.  In \cite{kadir}, building on results of Dwork, the zeta function is factored as:

\[Z(X_t/\F_p,T)=\frac{1}{(1-T)(1-pT)(1-p^2T)R_t(T) Q_t(T)}\]

\noindent Here $R_t(T)$ is a degree 3 polynomial of the form $(1\pm pT)(1-a_t T+p^2T)$, where $a_t$ is a constant, and $Q_t$ is a polynomial of degree 18.

Let $Y_t$ be the mirror family to quartics in $\mathbb{P}^3$ (constructed using Greene-Plesser and the Fermat pencil).  Then \cite{kadir} gives the mirror zeta function as:
\[Z(Y_t/\F_p,T)=\frac{1}{(1-T)(1-pT)^{19}(1-p^2T)R_t(T)}.\] 

\noindent The factor $R_t(T)$ corresponds to periods of the holomorphic form and its derivatives, and is invariant under mirror symmetry.  Furthermore, by applying Tate's conjecture one can show that for fields $\F_q$ containing sufficiently many roots of unity, $Z(X_t/\F_q,T) = Z(Y_t/\F_q,T)$ (see \cite{square}).

In \cite{wan}, Wan characterized the arithmetic mirror symmetry phenomenon that the authors of \cite{CORV2} observed for the Fermat quintic pencil as \emph{strong arithmetic mirror symmetry}.  In Wan's formulation, strong mirror symmetry arises when one has one-parameter families of Calabi-Yau varieties $X_t$ and $Y_t$ such that $X_t$ and $Y_t$ are mirrors for each $t$ where both are smooth.  In particular, if we generalize the Greene-Plesser construction to smooth $n+1$-folds in $\mathbb{P}^n$, we obtain strong mirrors in this sense.  Wan showed that these families of $n+1$-folds satisfy an arithmetic relationship; we take this relationship as our definition of strong arithmetic mirror symmetry.

\begin{definition}
Let $X$ and $W$ be a mirror pair of smooth Calabi-Yau varieties defined over $\mathbb{Q}$.  Suppose that for any prime $p$ where $X$ and $W$ have good reduction and any finite field $\F_q$ of characteristic $p$ and order $q$, $X$ and $W$ satisfy
\[\#X(\F_q) = \#W(\F_q) \pmod{q}.\]
Then we say $X$ and $W$ exhibit \emph{strong arithmetic mirror symmetry} and are a \emph{strong mirror pair}.
\end{definition}

Work of \cite{adolphsonsperber} and \cite{yu} on Dwork's unit root shows that the point counts $\pmod{q}$ for the Fermat pencil are related to solutions of the Picard-Fuchs equations for the family.  The Picard-Fuchs equation for the mirror pencil is identical to the Fermat pencil's Picard-Fuchs equation, but whether \cite{yu} and \cite{adolphsonsperber}'s results on unit roots can be generalized to give a new proof of strong arithmetic mirror symmetry for this family is not known.  One of our goals in the present work is to provide further evidence for a relationship between Picard-Fuchs equations and point counts.

What is known about arithmetic mirror symmetry for other mirror constructions?  There is a natural extrapolation of the Greene-Plesser mirror construction to deformations of diagonal hypersurfaces in weighted projective spaces.  Kadir, inspired by calculations for a specific two-parameter family of Calabi-Yau threefolds in a weighted projective space, argued that the zeta functions of any Greene-Plesser mirror pair of Calabi-Yau threefolds will have a shared common factor in their numerator in \cite{kadirpaper}.  Motivated by the Greene-Plesser construction, Fu and Wan showed in \cite{fw} that for appropriately chosen groups $G$,
\[\#X(\F_q) = \#(X/G)(\F_q) \pmod{q}.\]

\noindent However, these results for orbifolds do not extend to the mirror family, where singularities have been resolved.  More recent investigations of arithmetic mirror symmetry have focused on the Berglund-H\"{u}bsch-Krawitz mirror construction (cf. \cite{square, ap}).  This allows an extension of observations made about the mirrors of diagonal hypersurfaces to the mirrors of certain other hypersurfaces in weighted projective spaces.

In the current work, we generalize Fermat pencils in projective space to certain one-parameter families in toric varieties, using Batyrev's mirror symmetry construction.  From the toric point of view, the Greene-Plesser mirror of the Fermat pencil is determined by choosing monomials corresponding to the vertices and origin of the polytope for $\mathbb{P}^n$, and the Fermat pencil itself is determined by choosing the vertex and origin monomials of the polar dual polytope.  We use the vertices and the origin in polar dual pairs of reflexive polytopes to construct pairs of pencils of hypersurfaces, and experimentally investigate when these pairs exhibit strong arithmetic mirror symmetry, testing small prime numbers.  We find evidence for strong mirror symmetry when the pairs of pencils are related by resolutions of finite quotient maps, in a further generalization of the Greene-Plesser mirror construction.

For elliptic curves, we apply the classical Tate's isogeny theorem to describe the appropriate strong arithmetic mirror pairs completely.  We identify new mirror pairs of K3 surface pencils which exhibit strong arithmetic mirror symmetry for small primes, and describe conjectural, combinatorial criteria for finding similar examples in higher dimensions.  We show that our K3 surface pencils are of high Picard rank.  The non-weighted projective space examples fall naturally into two groups with the same Picard-Fuchs equations, which we use to study the geometric structure of these families.  Experimentally, we detect relationships between point counts for different K3 surface pencils in the same group, even when these pencils were not generated as a mirror pair.  Because members of the same group are related by resolutions of quotients by finite abelian groups, we may view the point-counting relationship as a higher-dimensional generalization of Tate's isogeny theorem.

\section{Hypersurfaces in toric varieties}\label{S:toric}

We begin by reviewing the construction of Calabi-Yau varieties as toric hypersurfaces, in order to fix notation.  Let $N$ be a lattice isomorphic to $\mathbb{Z}^n$.  A \emph{cone} in $N$ is a subset of the real vector space $N_\mathbb{R} = N \otimes \mathbb{R}$ generated by nonnegative $\mathbb{R}$-linear combinations of a set of vectors $\{v_1, \dots , v_m\} \subset N$.  We assume that cones are strongly convex, that is, they contain no line through the origin.  We say a cone is \emph{simplicial} if its generators are linearly independent over $\mathbb{R}$.  A \emph{fan} $\Sigma$ consists of a finite collection of cones such that each face of a cone in the fan is also in the fan, and any pair of cones in the fan intersects in a common face.  We say a fan $\Sigma$ in $N_\mathbb{R}$ is \emph{complete} if the union of all of the cones in $\Sigma$ is the entire vector space $N_\mathbb{R}$, and we say a fan is simplicial if all of its cones are simplicial.

Given a complete fan $\Sigma$, we may construct a toric variety in the following way.  Let $\Sigma(1)=\{v_1, \dots, v_q\}$ be lattice points generating the one-dimensional cones of $\Sigma$.  To each lattice point $v_j$, we associate a coordinate $z_j$ on $\mathbb{C}^q$.
Let $\mathcal{S}$ denote any subset of $\Sigma(1)$ that does \emph{not} span a cone of $\Sigma$, and let $\mathcal{V}(\mathcal{S})\subseteq \C^q$ be the linear subspace defined by setting $z_j = 0$ if the corresponding cone is in $\mathcal{S}$.  Define $Z(\Sigma) = \cup_\mathcal{S} \mathcal{V}(\mathcal{S})$.  Then $(\C^*)^q$ acts on $\C^q - Z(\Sigma)$ by coordinatewise multiplication.  
Let us write $v_j$ in coordinates as $(v_{j 1}, \dots, v_{j n})$.  Let $\phi : (\C^*)^q \to (\C^*)^n$ be given by
\[ \phi(t_1, \dots, t_q) \mapsto \left( \prod_{j=1}^q t_j ^{v_{j1}} , \dots, \prod_{j=1}^q t_j^{v_{j n}} \right) \]
The kernel of $\phi$ is isomorphic to $(\C^*)^{q-n} \times A$, where $A$ is a finite abelian group.  The toric variety $V_\Sigma$ associated with the fan $\Sigma$ is given by
\[ V_\Sigma  = (\C^q - Z(\Sigma)) / \text{Ker}(\phi).\]
\noindent The variables $z_j$ may be viewed as homogeneous coordinates on $V_\Sigma$, subject to the identifications required by $\text{Ker}(\phi)$.

%

The dual lattice $M$ of $N$ is given by $\mathrm{Hom}(N, \mathbb{Z})$; it is also isomorphic to $\mathbb{Z}^n$.  We write the pairing of $v \in N$ and $w \in M$ as $\langle v , w \rangle$.  We will use pairs of polytopes in $N_\mathbb{R}$ and $M_\mathbb{R}$ to construct pairs of toric varieties.  A \emph{lattice polytope} $\Delta$ has vertices in a lattice.  Let $\Delta$ be a lattice polytope in $N$ which has the origin as an interior point.  The \emph{polar dual} of $\Delta$ is defined as 
\[\Delta^\circ = \lbrace w \in M_\mathbb{R} \, | \, \langle v,w \rangle \geq -1 \, \forall \, v \in \Delta \rbrace.\] 
If both $\Delta$ and $\Delta^\circ$ are lattice polytopes, we call them \emph{reflexive polytopes}.  We illustrate a two-dimensional pair of reflexive polytopes in Figure~\ref{F:reflexivequads}.

\begin{figure}[htb]
\begin{tabular}{C{2.5cm} C{1cm} C{2.5cm}}
\textbf{Polytope 3}& &
\textbf{Polytope 14}\\
\begin{tikzpicture}[]
\draworiginb
\fill[shade1] (-1,0) -- (0,1) -- (1,0) -- (0,-1) -- cycle;
\draw[edge] (-1,0) -- (0,1) -- (1,0) -- (0,-1) -- cycle;
\node[point2] at (-1,-1) {};
\node[vertex] at (-1,0) {};
\node[point2] at (-1,1) {};
\node[vertex] at (0,1) {};
\node[vertex] at (1,0) {};
\node[point2] at (1,1) {};
\node[point2] at (1,-1) {};
\node[vertex] at (0,-1) {};
\end{tikzpicture}& \pdual &
\begin{tikzpicture}[]
\draworiginb
\fill[shade1] (-1,-1) -- (-1,1) -- (1,1) -- (1,-1) -- cycle;
\draw[edge] (-1,-1) -- (-1,1) -- (1,1) -- (1,-1) -- cycle;
\node[vertex] at (-1,-1) {};
\node[point] at (-1,0) {};
\node[vertex] at (-1,1) {};
\node[point] at (0,1) {};
\node[point] at (1,0) {};
\node[vertex] at (1,1) {};
\node[vertex] at (1,-1) {};
\node[point] at (0,-1) {};
\end{tikzpicture}
\end{tabular}
\caption{Dual reflexive quadrilaterals.}
\label{F:reflexivequads}
\end{figure}
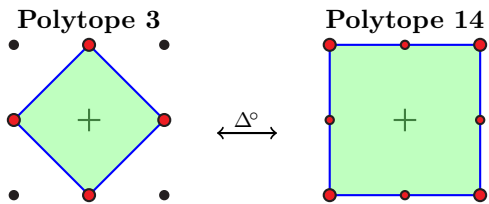

Reflexive polytopes have been classified in two, three, and four dimensions.  Under multiplication by elements of $\mathrm{GL}\!\left(n, \Z \right)$, there are 16, 4319, and 473 800 776 equivalence classes of reflexive polytopes, respectively (see \cite{ks}).

Given a reflexive polytope $\Delta$, we may obtain a fan $R$ by taking cones over the polytope's faces.  The one-dimensional cones of $R$ are then generated by the vertices of $\Delta$, and the $n$-dimensional cones are given by the facets of $\Delta$.  For example, the quadrilateral labeled Polytope~3 in Figure~\ref{F:reflexivequads} corresponds to the fan that has two-dimensional cones generated by $\{e_1, e_2\}$, $\{e_1, -e_2\}$,  $\{-e_1, e_2\}$, and  $\{-e_1, -e_2\}$.  The resulting toric variety is $\mathbb{P}^1 \times \mathbb{P}^1$.  

Computationally, we may identify $\text{Ker}(\phi)$ using the matrix $\mathrm{Mat}_\Delta$ whose columns are given by the vertices of $\Delta$.  The action of $(\C^*)^{q-n}$ is given by the kernel of $\mathrm{Mat}_\Delta$ as an integer matrix, multiplying by $\mathrm{Mat}_\Delta$ on the left.  The free abelian group $A$ is given by the elementary divisors of $\mathrm{Mat}_\Delta$.  (These are the entries of the Smith normal form on the main diagonal.)  For example, if we let $\Delta$ be the quadrilateral labeled Polytope~3 in Figure~\ref{F:reflexivequads} and $\Delta^\circ$ its polar dual, both $\mathrm{Mat}_\Delta$ and $\mathrm{Mat}_{\Delta^\circ}$ have kernels generated by $(1,0,0,1)$ and $(0,1,1,0)$ in $\mathbb{Z}^4$.  However, the elementary divisors of $\mathrm{Mat}_\Delta$ are both $1$, while the elementary divisors of $\mathrm{Mat}_{\Delta^\circ}$ are $1$ and $2$.  As we have already observed, $\Delta$ determines the toric variety $\mathbb{P}^1 \times \mathbb{P}^1$, while $\Delta^\circ$ yields a quotient of $\mathbb{P}^1 \times \mathbb{P}^1$ by a finite abelian group of order $2$.

The fan $R$ given by cones over a polytope's faces will be complete, but will not necessarily be simplicial.  Thus, the corresponding toric variety may have singularities.  However, we can construct a maximal simplicial refinement $\Sigma$ by using all of the lattice points of $\Delta$ to generate one-dimensional cones; such a refinement always exists by results of \cite{op}.  This operation on fans corresponds to resolution of singularities on the level of toric varieties.  Anticanonical hypersurfaces in toric varieties obtained from reflexive polytopes are Calabi-Yau varieties.  If the dimension of the reflexive polytope is at most four, taking a maximal simplicial refinement $\Sigma$ guarantees that the general anticanonical hypersurface is smooth.  Thus, two-dimensional reflexive polytopes determine families of elliptic curves, three-dimensional reflexive polytopes determine families of K3 surfaces, and four-dimensional reflexive polytopes yield families of Calabi-Yau threefolds.  Polar dual pairs of reflexive polytopes correspond to mirror pairs of Calabi-Yau varieties.

\begin{example}\label{Ex:Pn}
Let $e_1, \dots, e_n$ be generators of $N$.  Let $\Delta$ be the simplex with vertices $\{e_1, \dots, e_n, -e_1 + -e_2 + \dots + -e_n\}$.  The toric variety corresponding to the fan over the faces of $\Delta$ is $\mathbb{P}^n$; the anticanonical hypersurfaces are homogeneous polynomials in $n+1$ variables of degree $n+1$.  A maximal simplicial refinement of the fan over the faces of $\Delta^\circ$ yields Calabi-Yau varieties isomorphic to the mirror varieties $Y_t$ obtained from the Greene-Plesser construction.
\end{example}

Any reflexive simplex defines either a weighted projective space or the resolution of a quotient of a weighted projective space by a finite abelian group.  This observation allows us to study weighted projective space generalizations of the Greene-Plesser construction in a toric context.

We may use the lattice points of a reflexive polytope and its polar dual to write an explicit expression for an anticanonical hypersurface in homogeneous coordinates.  Let $\{v_k\} \subset \Delta \cap N$ generate the one-dimensional cones of $\Sigma$, and choose an arbitrary complex number $c_x$ for each lattice point $x$ of $\Delta^\circ$.  Then an anticanonical hypersurface is given by
\begin{equation} f = \sum_{x \in \Delta^\circ \cap M} c_x \prod_{k=1}^q z_k^{\langle v_k, x \rangle + 1}.\end{equation}\label{E:polynomialpencil}

If $\Delta$ is the simplex described in Example~\ref{Ex:Pn}, we may use Equation~\ref{E:polynomialpencil} to specify the Fermat pencil in $\mathbb{P}^n$ by choosing special values of the parameters $c_x$.  Specifically, we set the parameters corresponding to the vertices of $\Delta^\circ$ equal to one, associate a single parameter $t$ to the origin, and set the parameters corresponding to other lattice points to $0$.
Combinatorially, this operation makes sense for any reflexive polytope.  Thus, we define a one-parameter family of polynomials $f_{\Delta, t}$ by:
\begin{equation} f_{\Delta, t} = \left(\sum_{x \; \in \; \mathrm{vertices}(\Delta^\circ)} \prod_{k=1}^q z_k^{\langle v_k, x \rangle + 1}\right) + t \prod_{k=1}^q z_k.\end{equation}
For a polar dual pair of reflexive polytopes $\Delta$ and $\Delta^\circ$, $f_{\Delta, t}$ and $f_{\Delta^\circ, t}$ determine one-parameter subfamilies $X_t$ and $Y_t$ of the mirror families of Calabi-Yau varieties associated to the polytopes.  We investigate when $X_t$ and $Y_t$ possess strong arithmetic mirror symmetry in the following sections.

\section{Elliptic curves}\label{S:elliptic}

Polar dual pairs of two-dimensional reflexive polytopes determine mirror families of elliptic curves.  Recall that an \emph{isogeny} between two elliptic curves $E_1$ and $E_2$ is a (non-constant) rational morphism $\phi: E_1 \to E_2$ that maps the identity element of $E_1$ to the identity element of $E_2$.  Isogenies of elliptic curves are finite surjective maps.  Tate's Isogeny Theorem (\cite{tate}) relates point counts on isogenous curves: two elliptic curves over a finite field $\mathbb{F}_{q}$ are isogenous if and only if they have the same number of $\mathbb{F}_{q}$-rational points.

\begin{theorem}\label{T:polygons}
Let $\Delta$ and $\Delta^\circ$ be a polar dual pair of reflexive polygons, and let $\Sigma$ and $\Sigma^\circ$ be maximal simplicial refinements of the fans over the faces of $\Delta$ and $\Delta^\circ$, respectively.  Let $X_t$ and $Y_t$ be members of the families determined by the polynomials $f_{\Delta, t}$ and $f_{\Delta^\circ, t}$.  Then $\#X_t(\mathbb{F}_{q}) = \#Y_t(\mathbb{F}_{q})$ for each rational $t$ where $X_t$ and $Y_t$ are smooth when one of the following holds:

\begin{enumerate}
\item The polygons $\Delta$ and $\Delta^\circ$ are triangles.  In this case, either $V_\Sigma$ or $V_{\Sigma^\circ}$ is a weighted projective space.
\item The polygon $\Delta$ is self-dual.
\item The polygons $\Delta$ and $\Delta^\circ$ are quadrilaterals, and either $V_\Sigma$ or $V_{\Sigma^\circ}$ is $\mathbb{P}^1 \times \mathbb{P}^1$.
\end{enumerate}

\end{theorem}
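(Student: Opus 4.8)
The plan is to deduce the equality of point counts from Tate's Isogeny Theorem by exhibiting, for each admissible $t$, a nonconstant morphism between the elliptic curves $X_t$ and $Y_t$. Recall that over a finite field a smooth genus-one curve always carries a rational point, so $X_t$ and $Y_t$ are genuine elliptic curves over $\F_q$, and that any nonconstant morphism between elliptic curves is the composition of an isogeny with a translation. Consequently, to prove $\#X_t(\F_q)=\#Y_t(\F_q)$ it suffices to produce a nonconstant morphism $X_t \to Y_t$ defined over $\Q$: reducing modulo a prime of good reduction makes $X_t$ and $Y_t$ isogenous over $\F_q$, and Tate's theorem then forces their point counts to agree. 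The entire theorem thus reduces to constructing such a morphism in each of the three cases.

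Case (2) is immediate. If $\Delta$ is self-dual, an element of $\mathrm{GL}(2,\Z)$ carries $\Delta$ to $\Delta^\circ$; this same lattice isomorphism identifies the fans $\Sigma$ and $\Sigma^\circ$ and matches the vertex monomials defining $f_{\Delta,t}$ with those defining $f_{\Delta^\circ,t}$. Hence $X_t$ and $Y_t$ are isomorphic as curves over $\Q$, and the point counts coincide for trivial reasons.

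For cases (1) and (3) the structural input is that one of the two surfaces---say $V_\Sigma$---is a genuine weighted projective space (case 1) or $\mathbb{P}^1\times\mathbb{P}^1$ (case 3), while its mirror is a finite quotient of it. Concretely, on the unrefined fans $R$ and $R^\circ$ over the faces of $\Delta$ and $\Delta^\circ$, the two vertex matrices define quotients that differ by a finite abelian group $A$, recorded by the elementary divisors exactly as in the worked example of Section~\ref{S:toric}. This yields a natural quotient morphism $V_R \to V_{R^\circ}\cong V_R/A$. The pencil $f_{\Delta,t}$ is built from the $A$-symmetric combination of vertex monomials---this is the toric incarnation of the Greene--Plesser group action used in Example~\ref{Ex:Pn}---so the anticanonical curve on $V_R$ is $A$-invariant. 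I would then verify, by comparing the exponents $\langle v_k, x\rangle+1$ appearing on the two sides, that its image is precisely the anticanonical curve cut out by $f_{\Delta^\circ,t}$ on $V_{R^\circ}$, giving a finite quotient map between these curves.

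The main obstacle is that the maximal simplicial refinements needed to smooth the families change the ambient surfaces, and in particular the number of homogeneous coordinates, so the clean presentation $V_{R^\circ}\cong V_R/A$ is available only before refinement. I would circumvent this by working birationally: the refinements are toric resolutions, so $X_t$ and $Y_t$ are the smooth projective models of the (possibly singular) anticanonical curves on $V_R$ and $V_{R^\circ}$, and a dominant rational map between curves always induces a morphism of their smooth projective models. It therefore suffices to construct the quotient map on the unrefined surfaces, as above, and pass to smooth models; the result is the desired nonconstant morphism $\pi\colon X_t \to Y_t$. As a check, since $g(X_t)=g(Y_t)=1$, the Riemann--Hurwitz formula $2g(X_t)-2=\deg(\pi)\,(2g(Y_t)-2)+R$ forces the ramification $R$ to vanish, so $\pi$ is in fact an unramified isogeny; but mere nonconstancy already suffices. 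Invoking Tate's Isogeny Theorem completes cases (1) and (3), and with case (2) the proof.
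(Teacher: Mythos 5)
Your proposal follows essentially the same route as the paper's proof: in each of the three cases one of the two toric surfaces is (up to resolution) a finite quotient of the other, the quotient map induces an isogeny between the smooth elliptic curves $X_t$ and $Y_t$, and Tate's Isogeny Theorem then gives $\#X_t(\mathbb{F}_q) = \#Y_t(\mathbb{F}_q)$. The extra details you supply---the existence of a rational point on a genus-one curve over a finite field, the translation-plus-isogeny factorization, and the passage to smooth projective models to deal with the simplicial refinements---are precisely the steps the paper's terser argument leaves implicit, so your proof matches the paper's in substance.
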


\begin{proof}
We examine the sixteen equivalence classes of reflexive polygons directly.  When $\Delta$ and $\Delta^\circ$ are triangles, self-dual, or equivalent to the pair illustrated in Figure~\ref{F:reflexivequads}, then either $V_{\Sigma^\circ}$ is a resolution of a finite quotient of $V_\Sigma$, or $V_\Sigma$ is a finite quotient of $V_{\Sigma^\circ}$.  In this case, the quotient map on toric varieties induces an isogeny between each pair of smooth elliptic curves $X_t$ and $Y_t$.  By Tate's Isogeny Theorem, we have $\#X_t(\mathbb{F}_{q}) = \#Y_t(\mathbb{F}_{q})$.
\end{proof}

The cases described in Theorem~\ref{T:polygons} include 10 of the 16 equivalence classes of reflexive polygons.  In the remaining 6 cases, we used SageMath (\cite{sage}) to count points on $X_t$ and $Y_t$ in the case that $q$ is a prime number less than 100, and $t=0, 1, \dots, q-1$.  We found that $\#X_t(\mathbb{F}_{q})$ and $\#Y_t(\mathbb{F}_{q})$ were different in these cases.  Applying Tate's Isogeny Theorem, we see that there is no isogeny between $X_t$ and $Y_t$.  Thus, Theorem~\ref{T:polygons} characterizes the situations where strong arithmetic symmetry arises for $f_{\Delta, t}$ and $f_{\Delta^\circ, t}$ completely.

\section{Experimental evidence for strong mirror symmetry}

Let us now investigate families of K3 surfaces determined by three-dimensional reflexive polytopes.  We wish to generalize the results of Theorem~\ref{T:polygons}.  Thus, we seek to identify mirror pairs where the corresponding toric varieties are related by a resolution of a finite quotient map.

When we take the fan $R$ over the faces of a polytope $\Delta$ with $q$ vertices, we obtain the toric variety $(\mathbb{C}^q - Z(R))/((\mathbb{C}^*)^{q-n} \times A)$, where $A$ is a finite abelian group.  The zero set $Z(R)$ is determined by the combinatorial structure of $\Delta$: combinatorially equivalent polytopes, which have the same face poset (and in particular the same number of vertices), will determine the same zero set.  As we observed in Section~\ref{S:toric}, the action of $(\mathbb{C}^*)^{q-n}$ is given by the kernel of $\mathrm{Mat}_\Delta$ as an integer matrix.  We therefore deduce:

\begin{lemma}\label{L:vertexmatrix}
Let $\Delta$ and $\Gamma$ be combinatorially equivalent reflexive polytopes, and let $R$ and $T$ be the fans over the faces of $\Delta$ and $\Gamma$ respectively.  Fix an ordering of the vertices that respects the combinatorial equivalence, and let $M_\Delta$ and $M_\Gamma$ be the matrices with columns given by the vertices in this ordering.  Suppose the kernels of $\mathrm{Mat}_\Delta$ and $\mathrm{Mat}_\Gamma$ are the same submodule of $\mathbb{Z}^q$.  Then there exists a toric variety $W$ and finite abelian subgroups of the torus $A$ and $B$ such that $V(R) = W/A$ and $V(T) = W/B$.
\end{lemma}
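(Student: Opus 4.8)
The plan is to present $W$ as the quotient of $\C^q - Z$ by the common subtorus cut out by the shared integer kernel, and then to recover $V(R)$ and $V(T)$ by quotienting $W$ further by the respective finite component groups; in other words, to factor the two Cox quotients through a single intermediate variety. The first thing to establish is that combinatorial equivalence forces $Z(R) = Z(T)$. Recall that $Z(\Sigma) = \cup_{\mathcal{S}} \mathcal{V}(\mathcal{S})$ is assembled from the coordinate subspaces indexed by those subsets $\mathcal{S}$ of the rays that fail to span a cone, and that for a fan over the faces of a polytope a set of vertices spans a cone exactly when those vertices lie on a common face. Since the chosen vertex ordering identifies the face posets of $\Delta$ and $\Gamma$, the same index sets fail to span in $R$ and $T$, so the subspaces $\mathcal{V}(\mathcal{S})$ agree and $Z(R)$ and $Z(T)$ are the same subset $Z \subseteq \C^q$.

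Next I would analyze the two kernel groups. Writing $\phi_\Delta, \phi_\Gamma \colon (\C^*)^q \to (\C^*)^n$ for the monomial maps attached to $\mathrm{Mat}_\Delta$ and $\mathrm{Mat}_\Gamma$, the claim is that the hypothesis $\mathrm{Ker}(\mathrm{Mat}_\Delta) = \mathrm{Ker}(\mathrm{Mat}_\Gamma) =: L$ makes the identity components of $\mathrm{Ker}(\phi_\Delta)$ and $\mathrm{Ker}(\phi_\Gamma)$ literally equal. Choosing a basis $\ell^{(1)}, \dots, \ell^{(q-n)}$ of $L$, the subtorus $T_0 \subseteq (\C^*)^q$ obtained as the image of $(\lambda_1, \dots, \lambda_{q-n}) \mapsto \left(\prod_k \lambda_k^{\ell^{(k)}_j}\right)_j$ lands inside both $\mathrm{Ker}(\phi_\Delta)$ and $\mathrm{Ker}(\phi_\Gamma)$, precisely because $\mathrm{Mat}_\Delta \ell^{(k)} = \mathrm{Mat}_\Gamma \ell^{(k)} = 0$. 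Since the vertices of a reflexive polytope span $N_\R$, each $\mathrm{Mat}$ has rank $n$, so each kernel has dimension $q-n$; as $T_0$ is a subtorus of exactly this dimension, it is the identity component of each kernel. This is the step that turns the shared integer kernel into genuine geometry, and I expect it to be the crux of the argument.

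Finally I would set $W := (\C^q - Z)/T_0$ and take the quotients in stages. The quotient of the torus $(\C^*)^q$ by the subtorus $T_0$ is again a torus of dimension $n$, and $W$ is the toric variety carrying it as its dense orbit. Because $\mathrm{Ker}(\phi_\Delta) \cong (\C^*)^{q-n} \times A_\Delta$ with identity component $T_0$, the component group $A := \mathrm{Ker}(\phi_\Delta)/T_0$ is a finite abelian subgroup of the torus $(\C^*)^q/T_0$ of $W$, and quotient-in-stages gives
\[ W/A = (\C^q - Z)/\mathrm{Ker}(\phi_\Delta) = V(R). \]
Applying the same reasoning to $\Gamma$ with $B := \mathrm{Ker}(\phi_\Gamma)/T_0$ yields $W/B = V(T)$, which completes the proof. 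The only routine verifications that remain are that the staged quotient is valid, which is immediate because $T_0$ is a normal (indeed central) subgroup of each abelian kernel, and that $A$ and $B$ embed in the torus of $W$, which is immediate from the splitting of the diagonalizable kernel groups as a product of $T_0$ with their finite part.
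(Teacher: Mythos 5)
Your proposal is correct and follows essentially the same route as the paper: the paper's (very terse) proof also takes $W = (\C^q - Z(R))/(\C^*)^{q-n}$ with the torus action given by the common integer kernel of $\mathrm{Mat}_\Delta$ and $\mathrm{Mat}_\Gamma$, relying on the observations stated just before the lemma that combinatorial equivalence forces $Z(R)=Z(T)$ and that the continuous part of the Cox quotient is determined by that kernel. Your write-up simply makes explicit the details the paper leaves as ``the result follows immediately'' (the subtorus $T_0$ being the common identity component, and the quotient-in-stages argument), all of which are sound.
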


\begin{proof}
Let $W = (\mathbb{C}^q - Z(R))/(\mathbb{C}^*)^{q-n}$, with the action of $(\mathbb{C}^*)^{q-n}$ given by the kernel of $\mathrm{Mat}_\Delta$.  Then the result follows immediately.
\end{proof}

We searched the database of 4319 three-dimensional reflexive polytopes for polar dual pairs of polytopes satisfying the hypotheses of Lemma~\ref{L:vertexmatrix}.  Combinatorial equivalence is a fairly weak criterion: 1299 of the 4319 three-dimensional reflexive polytopes are combinatorially equivalent to their polar duals.  The kernel condition is satisfied far less often: we found 79 self-dual polytopes (including 6 simplices), and 26 pairs of non-self-dual reflexive polytopes with this property.  The simplices correspond to Greene-Plesser mirrors in weighted projective spaces, and certain finite quotients of these mirrors.  We are particularly interested in new, non-weighted projective space examples.  Of the 26 pairs of non-self-dual polytopes that we identified, 5 were not pairs of simplices.  These are polytopes 3, 10, 433, 436, 753 and their polar duals, using the numbering in the SageMath database \cite{sage}.  Polytopes 3, 753, and their polar duals yield vertex matrices with the same kernels; we classify these polytopes as Group~I.  Similarly, polytopes 10, 433, 436, and their polar duals also have vertex matrices with the same kernel; we classify these polytopes as Group~II.  Geometrically, one may write the toric variety $V_\Sigma$ for any polytope in Group~I as a resolution of a finite quotient of the toric variety determined by polytope 3, and one may write the toric variety $V_\Sigma$ for any polytope in Group~II as a resolution of a finite quotient of the toric variety determined by polytope 10.  We illustrate Group~I and II in Figures~\ref{F:reflexiveGroupI} and \ref{F:reflexiveGroupII}.

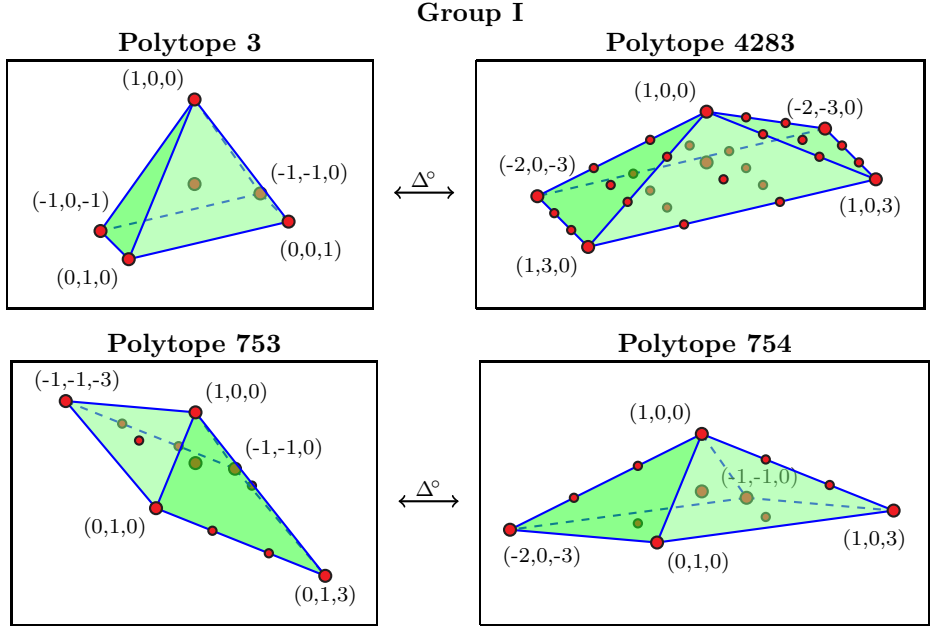
\begin{figure}[htb]
\textbf{Group I}\\
\begin{tabular}{|C{4.5cm} |C{1cm}| C{5.6cm}|}
\multicolumn{1}{c}{\textbf{Polytope 3}}& \multicolumn{1}{c}{} &
\multicolumn{1}{c}{\textbf{Polytope 4283}}\\ \cline{1-1}\cline{3-3}
\begin{tikzpicture}[view1,scale=2.5]
\draworigin
\draw[back] (-1,-1,0) -- (-1,0,-1);
\draw[back] (-1,-1,0) -- (0,0,1);
\draw[back] (1,0,0) -- (-1,-1,0);
\node[vertex] at (-1,-1,0) {};
\fill[shade3] (1,0,0) -- (0,1,0) -- (-1,0,-1) -- cycle;
\fill[shade1] (1,0,0) -- (0,1,0) -- (0,0,1) -- cycle;
\draw[edge] (1,0,0) -- (0,1,0);
\draw[edge] (1,0,0) -- (0,0,1);
\draw[edge] (0,1,0) -- (0,0,1);
\draw[edge] (1,0,0) -- (-1,0,-1);
\draw[edge] (0,1,0) -- (-1,0,-1);
\node[vertex] at (1,0,0) {};
\node[vertex] at (0,1,0) {};
\node[vertex] at (0,0,1) {};
\node[vertex] at (-1,0,-1) {};
\node[label, above left] at (1,0,0) {(1,0,0)};
\node[label, below left] at (0,1,0) {(0,1,0)};
\node[label, above right] at (-1,-1,0) {(-1,-1,0)};
\node[label, below right] at (-0.2,0,0.8) {(0,0,1)};
\node[label, above left] at (-0.8,0,-0.8) {(-1,0,-1)};
\end{tikzpicture}& \pdual &
\begin{tikzpicture}[view1,scale=1.5]
\draworigin
\draw[back] (-2,-3,0) -- (-2,0,-3);
    \node[point] at (-2,-2,-1) {};
    \node[point] at (-2,-1,-2) {};
    \node[point] at (-1,-1,-1) {};
    \node[point] at (0,1,0) {};
    \node[point] at (-1,0,-1) {};
    \node[point] at (0,0,1) {};
    \node[point] at (-1,-1,0) {};
\fill[shade3] (1,0,0) -- (1,3,0) -- (-2,0,-3) -- cycle;
    \node[point] at (0,1,-1) {};
\fill[shade1] (1,0,0) -- (1,3,0) -- (1,0,3) -- cycle;
    \node[point] at (1,1,1) {};
\fill[shade2] (1,0,0) -- (-2,-3,0) -- (1,0,3) -- cycle;
    \node[point] at (0,-1,1) {};
\draw[edge] (1,0,0) -- (1,3,0);
    \node[point] at (1,1,0) {};
    \node[point] at (1,2,0) {};
\draw[edge] (1,0,0) -- (-2,-3,0);
    \node[point] at (0,-1,0) {};
    \node[point] at (-1,-2,0) {};
\draw[edge] (1,0,0) -- (1,0,3);
    \node[point] at (1,0,1) {};
    \node[point] at (1,0,2) {};
\draw[edge] (1,3,0) -- (1,0,3);
    \node[point] at (1,2,1) {};
    \node[point] at (1,1,2) {};
\draw[edge] (-2,-3,0) -- (1,0,3);
    \node[point] at (-1,-2,1) {};
    \node[point] at (0,-1,2) {};
\draw[edge] (1,0,0) -- (-2,0,-3);
    \node[point] at (0,0,-1) {};
    \node[point] at (-1,0,-2) {};
\draw[edge] (1,3,0) -- (-2,0,-3);
    \node[point] at (0,2,-1) {};
    \node[point] at (-1,1,-2) {};
\node[vertex] at (1,0,0) {};
\node[vertex] at (1,3,0) {};
\node[vertex] at (-2,-3,0) {};
\node[vertex] at (1,0,3) {};
\node[vertex] at (-2,0,-3) {};
\node[label, above left] at (1,0,0) {(1,0,0)};
\node[label, below left] at (1,3,0) {(1,3,0)};
\node[label, above] at (-2,-3,0) {(-2,-3,0)};
\node[label, below] at (0.8,0,2.9) {(1,0,3)};
\node[label, above] at (-1.8,0,-3) {(-2,0,-3)};
\end{tikzpicture}\\ \cline{1-1}\cline{3-3}
\end{tabular}
\vspace{0.25cm}

\begin{tabular}{|C{4.5cm} |C{1cm}| C{5.6cm}|}
\multicolumn{1}{c}{\textbf{Polytope 753}}& \multicolumn{1}{c}{} &
\multicolumn{1}{c}{\textbf{Polytope 754}}\\ \cline{1-1}\cline{3-3}
\begin{tikzpicture}[view1,scale=1.5]
\draworigin
    \node[point] at (0,0,1) {};
\draw[back] (-1,-1,0) -- (0,1,3);
\draw[back] (-1,-1,0) -- (-1,-1,-3);
    \node[point] at (-1,-1,-1) {};
    \node[point] at (-1,-1,-2) {};
\draw[back] (1,0,0) -- (-1,-1,0);
\node[vertex] at (-1,-1,0) {};
\fill[shade1] (1,0,0) -- (0,1,0) -- (-1,-1,-3) -- cycle;
    \node[point] at (0,0,-1) {};
\fill[shade3] (1,0,0) -- (0,1,0) -- (0,1,3) -- cycle;
\draw[edge] (1,0,0) -- (0,1,0);
\draw[edge] (1,0,0) -- (0,1,3);
\draw[edge] (0,1,0) -- (0,1,3);
    \node[point] at (0,1,1) {};
    \node[point] at (0,1,2) {};
\draw[edge] (1,0,0) -- (-1,-1,-3);
\draw[edge] (0,1,0) -- (-1,-1,-3);
\node[vertex] at (1,0,0) {};
\node[vertex] at (0,1,0) {};
\node[vertex] at (0,1,3) {};
\node[vertex] at (-1,-1,-3) {};
\node[label, above right] at (1,0,0) {(1,0,0)};
\node[label, below left] at (0,1,0) {(0,1,0)};
\node[label, above right] at (-1,-1,0) {(-1,-1,0)};
\node[label, below] at (0,1,3) {(0,1,3)};
\node[label, above] at (-0.9,-1,-2.8) {(-1,-1,-3)};
\end{tikzpicture}& \pdual &
\begin{tikzpicture}[view1,scale=1.7]
\draworigin
    \node[point] at (-1,0,-1) {};
    \node[point] at (0,0,1) {};
\draw[back] (1,0,0) -- (-1,-1,0);
\draw[back] (-1,-1,0) -- (-2,0,-3);
\draw[back] (-1,-1,0) -- (1,0,3);
\node[vertex] at (-1,-1,0) {};
\node[label, above] at (-0.9,-1,0.2) {(-1,-1,0)};
\fill[shade3] (1,0,0) -- (0,1,0) -- (-2,0,-3) -- cycle;
\fill[shade1] (1,0,0) -- (0,1,0) -- (1,0,3) -- cycle;
\draw[edge] (1,0,0) -- (0,1,0);
\draw[edge] (1,0,0) -- (1,0,3);
    \node[point] at (1,0,1) {};
    \node[point] at (1,0,2) {};
\draw[edge] (0,1,0) -- (1,0,3);
\draw[edge] (1,0,0) -- (-2,0,-3);
    \node[point] at (0,0,-1) {};
    \node[point] at (-1,0,-2) {};
\draw[edge] (0,1,0) -- (-2,0,-3);
\node[vertex] at (1,0,0) {};
\node[vertex] at (0,1,0) {};
\node[vertex] at (1,0,3) {};
\node[vertex] at (-2,0,-3) {};
\node[label, above left] at (1,0,0) {(1,0,0)};
\node[label, below right] at (0,1,0) {(0,1,0)};
\node[label, below] at (0.7,0,2.7) {(1,0,3)};
\node[label, below] at (-1.9,0,-2.5) {(-2,0,-3)};
\end{tikzpicture}\\ \cline{1-1}\cline{3-3}
\end{tabular}
\caption{Two reflexive polytopes define toric varieties related by quotient maps}
\label{F:reflexiveGroupI}
\end{figure}
\vspace{0.125cm}

\begin{figure}[htb]
\textbf{Group II}\\
\begin{tabular}{|C{4.5cm} |C{1cm}| C{5.6cm}|}
\multicolumn{1}{c}{\textbf{Polytope 10}}& \multicolumn{1}{c}{} &
\multicolumn{1}{c}{\textbf{Polytope 4314}}\\ \cline{1-1}\cline{3-3}
\begin{tikzpicture}[view1,scale=2]
\draworigin
\draw[back] (1,0,0) -- (-2,-1,0);
\fill[shade3] (0,0,1) -- (-2,-1,0) -- (-2,0,-1) -- (0,1,0) -- cycle;
    \node[point] at (-1,0,0) {};
\fill[shade1] (1,0,0) -- (0,1,0) -- (0,0,1) -- cycle;
\fill[shade4] (1,0,0) -- (0,1,0) -- (-2,0,-1) -- cycle;
\draw[edge] (1,0,0) -- (0,1,0);
\draw[edge] (1,0,0) -- (0,0,1);
\draw[edge] (0,1,0) -- (0,0,1);
\draw[edge] (-2,-1,0) -- (0,0,1);
\draw[edge] (1,0,0) -- (-2,0,-1);
\draw[edge] (0,1,0) -- (-2,0,-1);
\draw[edge] (-2,-1,0) -- (-2,0,-1);
\node[vertex] at (1,0,0) {};
\node[vertex] at (0,1,0) {};
\node[vertex] at (-2,-1,0) {};
\node[vertex] at (0,0,1) {};
\node[vertex] at (-2,0,-1) {};
\node[label, above left] at (1,0,0) {(1,0,0)};
\node[label, above left] at (0,1,0) {(0,1,0)};
\node[label, below right] at (-2,-1,0) {(-2,-1,0)};
\node[label, above right] at (0,0,1) {(0,0,1)};
\node[label, below left] at (-2,0,-1) {(-2,0,-1)};
\end{tikzpicture}& \pdual &
\begin{tikzpicture}[view1,scale=1.5]
\draworigin
    \node[point] at (-2,-1,-2) {};
    \node[point] at (-2,-1,-1) {};
    \node[point] at (-2,-1,0) {};
    \node[point] at (-1,0,-1) {};
    \node[point] at (-1,0,0) {};
    \node[point] at (-1,0,1) {};
    \node[point] at (0,1,0) {};
    \node[point] at (0,1,1) {};
    \node[point] at (0,1,2) {};
    \node[point] at (-1,-1,-1) {};
    \node[point] at (0,0,1) {};
\draw[back] (-3,-2,0) -- (-3,-2,-4);
    \node[point] at (-3,-2,-1) {};
    \node[point] at (-3,-2,-2) {};
    \node[point] at (-3,-2,-3) {};
\draw[back] (-3,-2,0) -- (1,2,4);
    \node[point] at (-2,-1,1) {};
    \node[point] at (-1,0,2) {};
    \node[point] at (0,1,3) {};
\draw[back] (1,0,0) -- (-3,-2,0);
    \node[point] at (-1,-1,0) {};
\node[vertex] at (-3,-2,0) {};
\fill[shade3] (1,0,0) -- (1,2,0) -- (-3,-2,-4) -- cycle;
    \node[point] at (0,0,-1) {};
\fill[shade1] (1,0,0) -- (1,2,0) -- (1,2,4) -- cycle;
    \node[point] at (1,1,1) {};
\draw[edge] (1,0,0) -- (1,2,0);
    \node[point] at (1,1,0) {};

\draw[edge] (1,0,0) -- (1,2,4);
    \node[point] at (1,1,2) {};
\draw[edge] (1,2,0) -- (1,2,4);
    \node[point] at (1,2,1) {};
    \node[point] at (1,2,2) {};
    \node[point] at (1,2,3) {};
\draw[edge] (1,0,0) -- (-3,-2,-4);
    \node[point] at (-1,-1,-2) {};
\draw[edge] (1,2,0) -- (-3,-2,-4);
    \node[point] at (0,1,-1) {};
    \node[point] at (-1,0,-2) {};
    \node[point] at (-2,-1,-3) {};
\node[vertex] at (-3,-2,-4) {};
\node[vertex] at (1,0,0) {};
\node[vertex] at (1,2,0) {};
\node[vertex] at (1,2,4) {};
\node[label, above right] at (1,0,0) {(1,0,0)};
\node[label, below left] at (1,2,0) {(1,2,0)};
\node[label, above right] at (-3,-2,0.1) {(-3,-2,0)};
\node[label, below] at (1,2,4) {(1,2,4)};
\node[label, above] at (-3,-2,-4) {(-3,-2,-4)};
\end{tikzpicture}\\ \cline{1-1}\cline{3-3}
\end{tabular}
\vspace{0.25cm}

\begin{tabular}{|C{4.5cm} |C{1cm}| C{5.6cm}|}
\multicolumn{1}{c}{\textbf{Polytope 433}}& \multicolumn{1}{c}{} &
\multicolumn{1}{c}{\textbf{Polytope 3316}}\\ \cline{1-1}\cline{3-3}
\begin{tikzpicture}[view1,scale=2]
\draworigin
    \node[point] at (-1,0,0) {};
\draw[back] (-2,-1,0) -- (-2,-1,-2);
    \node[point] at (-2,-1,-1) {};
\draw[back] (-2,-1,0) -- (0,1,2);
    \node[point] at (-1,0,1) {};
\draw[back] (1,0,0) -- (-2,-1,0);
\node[vertex] at (-2,-1,0) {};
\fill[shade3] (1,0,0) -- (0,1,0) -- (-2,-1,-2) -- cycle;
\fill[shade1] (1,0,0) -- (0,1,0) -- (0,1,2) -- cycle;
\draw[edge] (1,0,0) -- (0,1,0);
\draw[edge] (1,0,0) -- (0,1,2);
\draw[edge] (0,1,0) -- (0,1,2);
    \node[point] at (0,1,1) {};
\draw[edge] (1,0,0) -- (-2,-1,-2);
\draw[edge] (0,1,0) -- (-2,-1,-2);
    \node[point] at (-1,0,-1) {};
\node[vertex] at (1,0,0) {};
\node[vertex] at (0,1,0) {};
\node[vertex] at (0,1,2) {};
\node[vertex] at (-2,-1,-2) {};
\node[label, above right] at (1,0,0) {(1,0,0)};
\node[label, below left] at (0,1,0) {(0,1,0)};
\node[label, above right] at (-2,-1,0.2) {(-2,-1,0)};
\node[label, below] at (0,1,2) {(0,1,2)};
\node[label, above] at (-1.7,-1,-2.2) {(-2,-1,-2)};
\end{tikzpicture}& \pdual &
\begin{tikzpicture}[view1,scale=1.7]
\draworigin
\draw[back] (1,0,0) -- (-3,0,-2);
    \node[point] at (-1,0,-1) {};
\draw[back] (1,0,0) -- (-3,-2,0);
    \node[point] at (-1,-1,0) {};
\fill[shade3] (1,2,0)  -- (1,0,2) -- (-3,-2,0)-- (-3,0,-2) -- cycle;
    \node[point] at (0,1,0) {};
    \node[point] at (-2,0,-1) {};
    \node[point] at (-1,0,0) {};
    \node[point] at (0,0,1) {};
    \node[point] at (-2,-1,0) {};
\fill[shade1] (1,0,0) -- (1,2,0) -- (1,0,2) -- cycle;
\draw[edge] (1,0,0) -- (1,2,0);
    \node[point] at (1,1,0) {};
\draw[edge] (1,0,0) -- (1,0,2);
    \node[point] at (1,0,1) {};
\draw[edge] (1,2,0) -- (1,0,2);
    \node[point] at (1,1,1) {};
\draw[edge] (-3,-2,0) -- (1,0,2);
    \node[point] at (-1,-1,1) {};
\draw[edge] (1,2,0) -- (-3,0,-2);
    \node[point] at (-1,1,-1) {};
\draw[edge] (-3,-2,0) -- (-3,0,-2);
    \node[point] at (-3,-1,-1) {};
\node[vertex] at (1,0,0) {};
\node[vertex] at (1,2,0) {};
\node[vertex] at (-3,-2,0) {};
\node[vertex] at (1,0,2) {};
\node[vertex] at (-3,0,-2) {};
\node[label, above left] at (1,0,0) {(1,0,0)};
\node[label, above left] at (1,2,0) {(1,2,0)};
\node[label, below right] at (-3,-2,0) {(-3,-2,0)};
\node[label, above right] at (1,0,2) {(1,0,2)};
\node[label, below] at (-3,0,-2) {(-3,0,-2)};
\end{tikzpicture}\\ \cline{1-1}\cline{3-3}
\end{tabular}
\vspace{0.25cm}

\begin{tabular}{|C{4.5cm} |C{1cm}| C{5.6cm}|}
\multicolumn{1}{c}{\textbf{Polytope 436}}& \multicolumn{1}{c}{} &
\multicolumn{1}{c}{\textbf{Polytope 3321}}\\ \cline{1-1}\cline{3-3}
\begin{tikzpicture}[view1,scale=1.8]
\draworigin
\draw[back] (1,0,0) -- (-2,-1,0);
\fill[shade4] (0,1,0) -- (-3,0,-2) -- (-2,-1,0) -- (1,0,2)-- cycle;
    \node[point] at (-2,0,-1) {};
    \node[point] at (-1,0,0) {};
    \node[point] at (0,0,1) {};
\fill[shade1] (1,0,0) -- (0,1,0) -- (1,0,2) -- cycle;
\fill[shade2] (1,0,0) -- (0,1,0) -- (-3,0,-2) -- cycle;
\draw[edge] (1,0,0) -- (0,1,0);
\draw[edge] (1,0,0) -- (1,0,2);
    \node[point] at (1,0,1) {};
\draw[edge] (0,1,0) -- (1,0,2);
\draw[edge] (-2,-1,0) -- (1,0,2);
\draw[edge] (1,0,0) -- (-3,0,-2);
    \node[point] at (-1,0,-1) {};
\draw[edge] (0,1,0) -- (-3,0,-2);
\draw[edge] (-2,-1,0) -- (-3,0,-2);
\node[vertex] at (1,0,0) {};
\node[vertex] at (0,1,0) {};
\node[vertex] at (-2,-1,0) {};
\node[vertex] at (1,0,2) {};
\node[vertex] at (-3,0,-2) {};
\node[label, above left] at (1,0,0) {(1,0,0)};
\node[label, below right] at (0.1,1.1,0) {(0,1,0)};
\node[label, below right] at (-2,-1,0) {(-2,-1,0)};
\node[label, above] at (1.1,0,1.9) {(1,0,2)};
\node[label, below] at (-3,0,-1.8) {(-3,0,-2)};
\end{tikzpicture}& \pdual &
\begin{tikzpicture}[view1,scale=1.55]
\draworigin
    \node[point] at (-1,0,-1) {};
    \node[point] at (-1,0,0) {};
    \node[point] at (-1,0,1) {};
    \node[point] at (0,0,1) {};
\draw[back] (1,0,0) -- (-2,-1,0);
\draw[back] (-2,-1,0) -- (0,1,4);
    \node[point] at (-1,0,2) {};    
\draw[back] (-2,-1,0) -- (-2,-1,-4);
    \node[point] at (-2,-1,-1) {};
    \node[point] at (-2,-1,-2) {};
    \node[point] at (-2,-1,-3) {};
\node[vertex] at (-2,-1,0) {};
\node[label, above] at (-1.9,-1,0.3) {(-2,-1,0)};
\fill[shade3] (1,0,0) -- (0,1,0) -- (-2,-1,-4) -- cycle;
    \node[point] at (0,0,-1) {};
\fill[shade1] (1,0,0) -- (0,1,0) -- (0,1,4) -- cycle;
\draw[edge] (1,0,0) -- (-2,-1,-4);
\draw[edge] (1,0,0) -- (0,1,0);
\draw[edge] (1,0,0) -- (0,1,4);
\draw[edge] (0,1,0) -- (0,1,4);
    \node[point] at (0,1,1) {};
    \node[point] at (0,1,2) {};
    \node[point] at (0,1,3) {};
\draw[edge] (0,1,0) -- (-2,-1,-4);
    \node[point] at (-1,0,-2) {};
\node[vertex] at (1,0,0) {};
\node[vertex] at (0,1,0) {};
\node[vertex] at (0,1,4) {};
\node[vertex] at (-2,-1,-4) {};
\node[label, above right] at (1,0,0) {(1,0,0)};
\node[label, below left] at (0,1,0) {(0,1,0)};
\node[label, below left] at (0,1,4.3) {(0,1,4)};
\node[label, above right] at (-2,-1,-4.2) {(-2,-1,-4)};
\end{tikzpicture}\\ \cline{1-1}\cline{3-3}
\end{tabular}
\caption{Three reflexive polytopes define toric varieties related by quotient maps}
\label{F:reflexiveGroupII}
\end{figure}

We use Sage Mathematics Software (\cite{sage}) to count the points on $X_t$ and $Y_t$ for fields $\F_p$ of order 5,7, or 11, where $t = 0, \dots, p-1$.  In each case, we generate an explicit list of the points on $V_\Sigma$ and substitute each point into $f_{\Delta, t}$.  We then compare the point count for $f_{\Delta^\circ, t}$.  On a current medium-to-high performance desktop computer, these computations require a minimum of several hours of processor time for each pair $\Delta$ and $\Delta^\circ$.  We find that for all smooth $X_t$ and $Y_t$, and $p=5, 7,$ or $11$,
$$\#X_t(\F_p) \equiv \#Y_t(\F_p) \pmod{p}$$
when $\Delta$ is in Group~I or Group~II.  Not only do we observe experimental evidence for strong arithmetic mirror symmetry in these cases, we observe further arithmetic relationships between K3 surfaces determined by members of the same group.  Indeed, suppose $\Delta$ and $\Gamma$ are both members of either Group~I or Group~II, let $X_t$ be given by $f_{\Delta, t}$, and let $W_t$ be given by $f_\Gamma(t)$.  Suppose $X_t$ and $W_t$ are smooth.  Then our experimental evidence shows
$$ \#X_t(\F_p) \equiv \#W_t(\F_p) \pmod{p}$$
for fields $\F_p$ of order 5,7, or 11, where $t = 0, \dots, p-1$.

We conjecture that strong arithmetic mirror symmetry will hold for similar toric examples in higher dimensions.

\begin{conjecture}
Let $\Delta$ and $\Gamma$ be a pair of reflexive polytopes satisfying the hypotheses of Lemma~\ref{L:vertexmatrix}, and let $X_t$ and $W_t$ be the hypersurfaces determined by $f_{\Delta, t}$ and $f_{\Gamma}(t)$, respectively.  Then for any rational $t$ where $X_t$ and $Y_t$ are smooth, strong arithmetic mirror symmetry holds:
$$\#X_t(\F_q) \equiv \#W_t(\F_q) \pmod{q}.$$
\end{conjecture}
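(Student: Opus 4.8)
The plan is to realize the point-count congruence as a chain of congruences passing through a common toric cover, directly generalizing the mechanism behind Theorem~\ref{T:polygons}. There, the quotient map of toric varieties induces an isogeny of elliptic curves and Tate's theorem supplies equal point counts; in higher dimension I would replace ``equal point counts'' by ``equal point counts modulo $q$'' and Tate's theorem by the congruences of \cite{fw}, at the cost of controlling the resolutions of singularities that intervene.

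First I would work at the level of the fans over faces, where the combinatorial equivalence of $\Delta$ and $\Gamma$ is available. Write $\overline{X}_t \subset V(R)$ and $\overline{W}_t \subset V(T)$ for the singular anticanonical hypersurfaces cut out by the vertex-and-origin pencils on the unrefined toric varieties. By Lemma~\ref{L:vertexmatrix} there is a single toric variety $\widetilde{V}$ and finite abelian groups $A,B$ with $V(R)=\widetilde{V}/A$ and $V(T)=\widetilde{V}/B$. I would lift both pencils to $\widetilde{V}$ and check that, after matching vertices of $\Delta^\circ$ with vertices of $\Gamma^\circ$ through the combinatorial equivalence, they pull back to a common $A$- and $B$-invariant pencil $\widetilde{X}_t$ on $\widetilde{V}$; the identity of kernels is what would force the monomial exponents $\langle v_k,x\rangle+1$ to agree on the cover. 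Granting this, $\overline{X}_t=\widetilde{X}_t/A$ and $\overline{W}_t=\widetilde{X}_t/B$, and the congruences of \cite{fw} give $\#\overline{X}_t(\F_q)\equiv\#\widetilde{X}_t(\F_q)\equiv\#\overline{W}_t(\F_q)\pmod q$.

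It then remains to pass from the singular hypersurfaces $\overline{X}_t,\overline{W}_t$ to their crepant toric resolutions $X_t,W_t$ without disturbing the count modulo $q$; that is, to show $\#X_t(\F_q)\equiv\#\overline{X}_t(\F_q)\pmod q$ on each side separately. The natural tool is the orbit decomposition $V_\Sigma=\bigsqcup_\sigma O_\sigma$ together with the slope observation that $\#X_t(\F_q)$ modulo $q$ sees only the unit-root (slope-zero) part of the cohomology: the refinement replaces torus-fixed strata by toric exceptional loci whose contributions to $X_t$ are swept out by lower-dimensional tori and projective cells, and since $\#\proj^k(\F_q)\equiv 1$ and $\#(\F_q^\ast)^d\equiv(-1)^d\pmod q$, one would have to verify that these corrections cancel identically between the two sides. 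Equivalently, one can argue $p$-adically: by \cite{adolphsonsperber} and \cite{yu} the count modulo $q$ is governed by Dwork's unit root, hence by the holomorphic solution of the Picard--Fuchs equation of the pencil, and the families in a common group share that equation.

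The main obstacle is exactly this last step. Controlling how a crepant resolution changes point counts modulo $q$ is precisely where the orbifold results of \cite{fw} fail to extend, and the $p$-adic reformulation requires generalizing the unit-root computations of \cite{adolphsonsperber} and \cite{yu} beyond the Fermat setting, which is open. In the elliptic case both difficulties evaporate: Tate's isogeny theorem applies directly to the smooth curves $X_t$ and $Y_t$ linked by the quotient map, bypassing the resolution bookkeeping entirely. Producing a higher-dimensional substitute for Tate's theorem --- an isomorphism of unit-root crystals induced by the correspondence through $\widetilde{V}$ --- is the crux of any proof of the conjecture.
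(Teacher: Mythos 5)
The first thing to say is that there is no proof in the paper for you to match: this statement is labeled a \emph{conjecture}, and the paper's only support for it is experimental --- SageMath point counts for the Group~I and Group~II K3 pencils over $\F_p$ with $p = 5, 7, 11$ --- together with the proved two-dimensional analogue (Theorem~\ref{T:polygons}), whose proof rests on Tate's isogeny theorem and therefore has no ready substitute in higher dimension. So the only question is whether your argument actually closes the conjecture, and it does not; to your credit, you say so yourself.

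The gaps are genuine, and it is worth naming them more precisely than ``the crux.'' First, the common-cover step is unverified: the kernel condition of Lemma~\ref{L:vertexmatrix} gives a common presentation $V(R) = W/A$ and $V(T) = W/B$ of the \emph{ambient} toric varieties, but $f_{\Delta,t}$ and $f_{\Gamma,t}$ are genuinely different polynomials in the homogeneous coordinates (their exponents $\langle v_k, x\rangle + 1$ are computed from different dual polytopes), so their pullbacks to $W$ need not coincide; this works for the Greene--Plesser/Fermat simplices, but for pairs such as polytopes 3 and 753 it requires an argument, at best coincidence up to a torus automorphism of $W$, which you have not supplied. Second, even granting a common invariant pencil $\widetilde{X}_t$, the congruence of \cite{fw} applies only to the singular quotients $\widetilde{X}_t/A$ and $\widetilde{X}_t/B$; the paper's introduction states explicitly that these orbifold results do not extend once singularities are resolved, so your passage from $\overline{X}_t$ to the crepant resolution $X_t$ modulo $q$ is not a bookkeeping step but precisely the open problem --- the exceptional-locus corrections are not known to cancel, and nothing in your orbit-decomposition sketch forces them to. Third, the $p$-adic alternative via unit roots is only available for Dwork-type families: the paper itself records that generalizing \cite{adolphsonsperber} and \cite{yu} beyond that setting is not known, and it poses the Picard--Fuchs point-count question as an open Question, not a tool one may invoke. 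In short, your proposal is a sensible research program that reconstructs the heuristics motivating the conjecture (and aligns with the paper's own discussion of why it should be true), but each of its three steps is either unverified or known to be open, so it is a strategy, not a proof.
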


\section{Picard-Fuchs equations}

There is a moduli space of lattice-polarized K3 surfaces.  Within this moduli space lie one-parameter families of generically Picard rank 19 K3 surfaces.  Because our toric construction yields one-parameter families in a combinatorially natural way, we expect that these are generically Picard rank 19 families.  We say a family of K3 surfaces is \emph{lattice-polarized} if there exists a lattice $M$ such that $M \hookrightarrow \mathrm{Pic}(X)$ for every K3 surface $X$ in the family.  The dimension of $M$ places a lower bound on the Picard rank of each member of the family.  Lattice-polarized Picard rank 19 families have special structure: for example, Picard rank 19 families polarized by the lattice $U \oplus E_8 \oplus E_8 + (-2n)$, where $U$ is a unimodular lattice of signature $(1,1)$ and $E_8$ is taken to be negative definite, are geometrically related to a product of two elliptic curves with an $n$-isogeny, via the \emph{Shioda-Inose structure}.  More generally, Dolgachev described a mirror map for lattice-polarized families of K3 surfaces in \cite{Dolgachev}.

Computing the general Picard lattice for a specific parametrized family of K3 surfaces is typically a difficult and ad-hoc process.  Rather than doing so directly, we will use differential equations to extract information about the geometric structure of our families, following the strategy of \cite{dor}.

 A \emph{period} is the integral of a differential form with respect to a specified homology class. 
 Periods of holomorphic forms encode the complex structure of varieties.
 The \emph{Picard-Fuchs differential equation} of a family of varieties is a differential equation that describes the way the value of a period changes as we move through the family.
Solutions to Picard-Fuchs equations for holomorphic forms on Calabi-Yau varieties can be used to define a mirror map, which relates the moduli space of complex deformations of a variety to a moduli space of complexified K\"{a}hler deformations of the mirror variety.

We use the Griffiths-Dwork technique to compute the Picard-Fuchs equation for the holomorphic form on the pencil $f_{\Delta, t}$ for each of the reflexive polytopes $\Delta$ in Group I and Group II, following the method outlined in \cite{klmsw}.  The quotient maps which relate different toric varieties determined by polytopes in Group~I or Group~II preserve the holomorphic forms on the K3 surfaces; thus, the Picard-Fuchs equation will be the same for every member of a group.  We obtain the pair of third-order differential equations given in Table~\ref{Ta:picardfuchs}.  By \cite{dor}, because the Picard-Fuchs equations are third-order, the Picard rank of a general K3 surface in each family must be 19, as we expected.

\begin{table}[htb]
\begin{tabular}{|c|c|}
\hline
\textbf{Group} & \textbf{Picard-Fuchs Equation}\\
\hline 
I & $\displaystyle \p{3t} \mathcal{P} + \p{7t^2} \mathcal{P'} + \p{6t^3 + 162} \mathcal{P''} + \p{t^4 + 108t} \mathcal{P'''} = 0$ \\
II & $\displaystyle \p{t} \mathcal{P} + \p{7t^2} \mathcal{P'} + \p{6t^3} \mathcal{P''} + \p{t^4 - 256} \mathcal{P'''} = 0$ \\
\hline
\end{tabular}
\caption{Picard-Fuchs equations}\label{Ta:picardfuchs}
\end{table}

As we noted earlier, the Picard-Fuchs equation for the Fermat pencil determines its point count $\pmod{q}$.  One might speculate that the Picard-Fuchs equations are enough to predict the relationships between point counts for members of each group.

\begin{question}
Let $V_t$ and $W_t$ be pencils of Calabi-Yau $n$-folds, and suppose the Picard-Fuchs equation of $V_t$ is the same as the Picard-Fuchs equation of $W_t$.  Is
$$\#V_t(\F_q) \equiv \#W_t(\F_q) \pmod{q}$$
when $V_t$ and $W_t$ are smooth?
\end{question}

We note that the Picard-Fuchs equations computed in Table~\ref{Ta:picardfuchs} have a special structure: we can relate these equations to lower-order differential equations which appear to be Picard-Fuchs equations of elliptic curves.

\begin{definition}
Let $L(y)$ be a homogeneous linear differential equation with coefficients in ${\mathbb{C}}(t)$.  Then there exists a homogeneous linear differential equation $M(y) = 0$ with coefficients in $\mathbb{C}(t)$, such that the solution space of $M(y)$ is the $\mathbb{C}$-span of 
$$\{\nu_1 \nu_2 \ | \ L(\nu_1) = 0 \ \mbox{and} \ L(\nu_2) = 0 \} \ .$$
We say the differential equation $M(y)$ is the \emph{symmetric square} of $L$.
\end{definition}

In particular, the symmetric square of the second-order linear, homogeneous differential equation
$$
a_2\derive{\omega}{2} + a_1\frac{\partial \omega}{\partial t} + a_0\omega = 0
$$
is
\begin{align}\label{E:symmetricSquare}
\begin{split}
a_2^2\derive{\omega}{3} + 3a_1a_2\derive{\omega}{2} + (4a_0a_2+2a_1^2+a_2a_1'-a_1a_2')\frac{\partial \omega}{\partial t} \\
 + (4a_0a_1 + 2a_0'a_2-2a_0a_2')\omega = 0 
\end{split}
\end{align}
where primes denote derivatives with respect to $t$.

\begin{theorem}\cite[Theorem 5]{dor}\label{T:DSymmSquare} The Picard-Fuchs equation of a family of Picard rank-$19$ lattice-polarized K3 surfaces is a third-order ordinary differential equation which can be written as the symmetric square of a second-order homogeneous linear Fuchsian differential equation. 
\end{theorem}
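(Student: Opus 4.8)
The plan is to feed a Hodge-theoretic input about Picard rank-$19$ K3 surfaces into a classical algebraic criterion for symmetric squares. First I would pin down the order of the equation. For a K3 surface of Picard rank $19$, the lattice $H^2(X,\Z)$ has rank $22$ and signature $(3,19)$, while the algebraic classes span a sublattice of signature $(1,18)$; hence the transcendental lattice, the orthogonal complement of the Picard lattice, has rank $3$ and signature $(2,1)$. The holomorphic two-form $\omega$ and its periods live in the complexification of this rank-$3$ transcendental local system, so as we vary the family the periods span a three-dimensional solution space and the minimal Picard-Fuchs operator $L$ is of order three.

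Next I would extract the invariant bilinear form. The intersection pairing restricts to a nondegenerate symmetric form on the transcendental lattice and is flat for the Gauss-Manin connection, so the monodromy of $L$ lands in the orthogonal group $O(2,1)$ of this form. Equivalently, the rank-$3$ local system of solutions is isomorphic to its dual, and this self-duality is the geometric fact that will force $L$ to be a symmetric square.

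The heart of the argument is then purely algebraic. After the substitution removing the second-derivative term, I would write $L$ in normal form as $\omega''' + P\,\omega' + Q\,\omega = 0$, where primes denote $t$-derivatives. A short computation with the formal adjoint gives $L^\ast = -\omega''' - P\,\omega' + (Q - P')\,\omega$, so $L$ is anti-self-adjoint, $L^\ast = -L$, exactly when $Q = \tfrac12 P'$; and self-duality of the monodromy representation is precisely what guarantees this relation. On the other hand, specializing the general symmetric square in Equation~\ref{E:symmetricSquare} to a normalized second-order operator $\omega'' + r\,\omega = 0$ yields $\omega''' + 4r\,\omega' + 2r'\,\omega = 0$. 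Matching coefficients, the condition $Q = \tfrac12 P'$ is equivalent to $P = 4r$ and $Q = 2r'$, so setting $r = P/4$ recovers the desired second-order operator $M$ with $\mathrm{Sym}^2(M) = L$. Conceptually this is the isomorphism $SO(3,\C) \cong PSL(2,\C)$, under which the standard three-dimensional representation is the symmetric square of the two-dimensional one.

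The main obstacle will be the passage from the monodromy, a projective $PSL_2$-type datum, to an honest second-order Fuchsian equation over $\C(t)$: the invariant form determines the two-dimensional local system only up to sign, so I must verify that the operator $M$ built from $r = P/4$ is genuinely single-valued and Fuchsian, with regular singular points and no spurious apparent singularities introduced by the normalization. Checking that the holomorphic-form period is itself a square of a solution of $M$, so that $L$ is literally the symmetric square and not merely an operator sharing its monodromy, is the remaining point requiring care.
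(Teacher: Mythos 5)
First, a structural point: the paper offers no proof of this statement at all --- it is imported verbatim from Doran (the citation in the theorem header), so the only meaningful comparison is with Doran's argument in \cite{dor}. There, the decisive geometric input is that the three transcendental periods of a rank-$19$ lattice-polarized family satisfy the quadratic relation coming from the cup-product polarization, $\langle \omega, \omega \rangle = 0$: the projectivized period map lands on a conic in $\mathbb{P}^2$, and a third-order equation whose solution curve lies on a nondegenerate conic is classically the symmetric square of a second-order equation. Your scaffolding is the standard one and is correct as far as it goes: the transcendental lattice has rank $3$, so the operator has order $3$; your adjoint computation (anti-self-adjointness of the normal form $\omega''' + P\omega' + Q\omega = 0$ is exactly $Q = \tfrac12 P'$) is right; and your specialization of Equation~\ref{E:symmetricSquare} to $\omega'' + r\omega = 0$, giving $\omega''' + 4r\omega' + 2r'\omega = 0$, is also right, so that $Q = \tfrac12 P'$ is indeed equivalent to $L = \mathrm{Sym}^2\bigl(\tfrac{d^2}{dt^2} + \tfrac{P}{4}\bigr)$.

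The genuine gap is the bridge: ``self-duality of the monodromy representation is precisely what guarantees this relation'' is false as stated. Orthogonality of the local system (monodromy in $O(2,1)$) is a gauge-invariant property, while the coefficient identity $Q = \tfrac12 P'$ is a property of the operator attached to a \emph{specific} cyclic vector and is destroyed by general gauge transformations; a gauge-invariant hypothesis cannot imply it. Self-duality only yields that $L$ is gauge equivalent to a symmetric square --- exactly the weaker conclusion you flag in your final paragraph as ``the remaining point requiring care,'' but your outline contains no mechanism to close it. The missing ingredient is the one datum about the cyclic vector itself that you never invoke: isotropy, $\langle \omega, \omega \rangle = 0$ (the first Hodge--Riemann relation), whence also $\langle \omega, \nabla\omega \rangle = 0$. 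With these, flatness of the pairing forces the identity: setting $c = \langle \nabla\omega, \nabla\omega \rangle = -\langle \omega, \nabla^2\omega \rangle$ and differentiating both expressions using $\nabla^3\omega = -P\nabla\omega - Q\omega$ gives $\langle \nabla\omega, \nabla^2\omega \rangle = 0$, so $c$ is constant; then $\langle \nabla^2\omega, \nabla^2\omega \rangle = Pc$, and one further differentiation gives $P'c = 2Qc$; nondegeneracy of the pairing on the span of $\omega, \nabla\omega, \nabla^2\omega$ forces $c \neq 0$, hence $Q = \tfrac12 P'$. This is Doran's conic condition in operator form. Once it is in place, your closing worries about lifting $PSL_2$ to $SL_2$ and about single-valuedness evaporate: $M = \tfrac{d^2}{dt^2} + \tfrac{P}{4}$ is manifestly rational and Fuchsian, and $L$ is literally $\mathrm{Sym}^2(M)$, with no monodromy-lifting argument needed.
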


The Picard-Fuchs equations for the families in Group~I and Group~II are symmetric squares.  We give the corresponding symmetric square roots, which are second-order differential equations, in Table~\ref{Ta:squareroot}.

\begin{table}[htb]
\begin{tabular}{|c|c|}
\hline
\textbf{Group} & \textbf{Symmetric Square Root}\\
\hline 
I & $\displaystyle \p{\frac{t^2}{4}} \mathcal{P} + \p{2t^3 + 54} \mathcal{P'} + \p{t^4 + 108t} \mathcal{P''} = 0$\\
II & $\displaystyle \p{\frac{t^2}{4}} \mathcal{P} + \p{2 t^3} \mathcal{P}' + \p{t^4 - 256} \mathcal{P}'' = 0$ \\
\hline
\end{tabular}
\caption{Symmetric square roots of Picard-Fuchs equations}\label{Ta:squareroot}
\end{table}

\begin{definition}
The \emph{projective normal form} of an order $k$ Fuchsian differential equation
$$f_0\of{t} \mathcal{P} + f_1\of{t} \mathcal{P'} + \dots + f_{k-1}\of{t} \mathcal{P}^{\p{k-1}} + \mathcal{P}^{\p{k}}= 0$$
is the unique order $k$ Fuchsian differential equation without a $\p{k-1}$ order derivative
$$g_0\of{t} \mathcal{R} + g_1\of{t} \mathcal{R'} + \dots + g_{k-2}\of{t} \mathcal{R}^{\p{k-2}} + \mathcal{R}^{(k)} ,\hspace{0.5cm} g_i\of{t} \in \C\of{t}$$
obtained by scaling the fundamental solutions by the $k$th root of the Wronskian.
\end{definition}

In \cite{dor}, Doran showed that the mirror map of a one-parameter family of Calabi-Yau varieties is determined by the projective normal form of its Picard-Fuchs equation. 

In the case of a second-order Fuchsian differential equation
$$f_0\of{t} \mathcal{P} + f_1\of{t} \mathcal{P'} + \mathcal{P''} = 0$$
the projective normal form is simply
$$\p{f_0\of{t} - \frac{1}{2} f_1'\of{t} - \frac{1}{4} f_1\of{t}^2} \mathcal{R} + \mathcal{R}'' = 0.$$
We compute the projective normal forms for our symmetric square roots in Table~\ref{Ta:pnf}.

\begin{table}[htb]
\begin{tabular}{|c|c|}
\hline
\textbf{Group} & \textbf{Projective Normal Form}\\
\hline 
I & $\displaystyle \p{\frac{t^6 - 540 t^3 + 8748}{4 t^2 \p{t^3 + 108}^2}} \mathcal{R} + \mathcal{R''} =  0$\\
II & $\displaystyle \p{\frac{t^2 \p{t^4 + 2816}}{4 \p{t^2 + 16}^2 \p{t+4}^2 \p{t-4}^2}} \mathcal{R} + \mathcal{R''} = 0$ \\
\hline
\end{tabular}
\caption{Projective normal forms of symmetric square roots}\label{Ta:pnf}
\end{table}

We saw in \S~\ref{S:elliptic} that mirror relationships for elliptic curves can be described as isogenies, which preserve arithmetic information.  One might ask whether the strong arithmetic mirror symmetry relationship we observe for members of Group~I and Group~II ultimately derives from a property of elliptic curves: is there a geometric construction relating our families of K3 surfaces to families of elliptic curves with Picard-Fuchs equations described above?

\bibliographystyle{amsalpha}

\begin{thebibliography}{DKSSVW00}


\bibitem[AS16]{adolphsonsperber} A. Adolphson and S. Sperber, \textit{Distinguished-root formulas for generalized Calabi-Yau hypersurfaces}, preprint, 2016. arXiv:1602.03578 [math.AG]

\bibitem[AP15]{ap} M. Aldi and A. Peruni\v{c}i\'{c}, \textit{$p$-adic Berglund-H\"{u}bsch Duality}, preprint, 2015. arXiv:1409.5017 [math-ph]

\bibitem[Bat94]{batyrev}V.V. Batyrev, \textit{Dual polyhedra and mirror symmetry for {C}alabi-{Y}au hypersurfaces in toric varieties}, J. Algebraic Geom. \textbf{3} (1994) 493--535.

\bibitem[CDRV00]{CORV}P. Candelas, X. de la Ossa, and F. Rodriguez Villegas, \textit{Calabi--Yau manifolds over finite fields, {I}}, preprint, 2000. arXiv:hep-th/0012233v1.

\bibitem[CDRV01]{CORV2} P. Candelas, X. de la Ossa and F. Rodriguez-Villegas, \textit{Calabi--Yau manifolds
over finite fields II}, Calabi--Yau varieties and mirror symmetry,
Toronto, 2001, 121--157.

\bibitem[D96]{Dolgachev}
I. V. Dolgachev.  \textit{Mirror symmetry for lattice polarized $K3$ surfaces.}  Algebraic geometry, 4.  Journal of Mathematical Sciences 81 (1996), no. 3, 2599--2630. 

\bibitem[Dor00]{dor} C.F. Doran, \textit{Picard-Fuchs uniformization and modularity of the mirror map}, Communications in Mathematical Physics, Vol. 212, No. 3, p. 625--647, 2000.

\bibitem[DKSSVW16]{square}C.F. Doran, T. Kelly, A. Salerno, S. Sperber, J. Voight, and U. Whitcher, \textit{Arithmetic mirror symmetry for symmetric K3 quartic pencils}, in preparation.

\bibitem[Dwo60]{dwork} B. Dwork, \emph{On the rationality of the zeta function of an algebraic variety}, American Journal of Mathematics, Vol. 82, No. 3, p. 631--648, 1960. 

\bibitem[FW06]{fw}L. Fu and D. Wan, \textit{Mirror congruence for rational points on {C}alabi-{Y}au varieties}, Asian J. Math. \textbf{10} (2006) 1, 1--10.

\bibitem[Kad04]{kadir} S. Kadir, \textit{The Arithmetic of Calabi--Yau Manifolds and Mirror Symmetry}, Oxford DPhil Thesis, 2004. arXiv: hep-th/0409202

\bibitem[Kad06]{kadirpaper} S. Kadir, \textit{{Arithmetic mirror symmetry for a two-parameter family of {C}alabi-{Y}au manifolds}}, Mirror symmetry {V}, AMS/IP Stud. Adv. Math. 38, 35--86, Amer. Math. Soc., Providence, RI, 2006.

\bibitem[KLMSW13]{klmsw}D. Karp, J. Lewis, D. Moore, D. Skjorshammer, and U. Whitcher, \textit{On a family of {K}3 surfaces with {$\mathcal{S}_4$} symmetry},
Arithmetic and geometry of {K}3 surfaces and {C}alabi-{Y}au threefolds, Fields Inst. Commun. \textbf{67} (2013) 367--386.  Springer, New York.

\bibitem[KS00]{ks} M. Kreuzer and H. Skarke, \textit{Complete classification of reflexive polyhedra in four dimensions}, Advances in Theoretical and Mathematical Physics, Vol. 4, No. 6, p. 1209--1230, 2000.

\bibitem[S+16]{sage} \textit{Sage Mathematics Software (Version 7.0)}, The Sage Developers, 2016, \url{ http://www.sagemath.org}.

\bibitem[Tat66]{tate}J. Tate, \textit{Endomorphisms of abelian varieties over finite fields}, Invent. Math. \textbf{2} (1966), {134--144}.

\bibitem[OP91]{op}T. Oda and H. S. Park, \textit{Linear {G}ale transforms and {G}el'fand-{K}apranov-{Z}elevinskij decompositions}, Tohoku Math. J. (2),
 \textbf{43} (1991) 3, 375--399.

\bibitem[Yu09]{yu} J.-D. Yu, \textit{Variation of the unit root along the {D}work family of {C}alabi-{Y}au varieties}, Math. Ann. \textbf{343} (2009) 1, 53--78.

\bibitem[Wan06]{wan}Daqing Wan, \textit{Mirror symmetry for zeta functions}, Mirror symmetry. {V}, AMS/IP Stud. Adv. Math., 38, 2006.

\end{thebibliography}

\end{document}